\newcommand{\PreserveBackslash}[1]{\let\temp=\\#1\let\\=\temp}
\newcolumntype{C}[1]{>{\PreserveBackslash\centering}p{#1}}
\newcolumntype{R}[1]{>{\PreserveBackslash\raggedleft}p{#1}}
\newcolumntype{L}[1]{>{\PreserveBackslash\raggedright}p{#1}}
\DeclareSymbolFont{eulargesymbols}{U}{zeuex}{m}{n}
\DeclareMathSymbol{\intop}{\mathop}{eulargesymbols}{"52}
\DeclareMathSymbol{\ointop}{\mathop}{eulargesymbols}{"49}
\newcommand{\al}{\alpha}
\newcommand{\bC}{\mathbb C}
\newcommand{\be}{\beta}
\newcommand{\bR}{\mathbb R}
\newcommand{\bT}{\mathbb T}
\newcommand{\bZ}{\mathbb Z}
\newcommand{\cI}{\mathcal I}
\newcommand{\de}{\delta}
\newcommand{\e}{\mathrm{e}}
\newcommand{\eps}{\varepsilon}
\newcommand{\ga}{\gamma}
\newcommand{\la}{\lambda}
\newcommand{\nf}{\infty}
\newcommand{\Ph}{\Phi}
\newcommand{\si}{\sigma}
\newcommand{\tht}{\theta}
\newcommand{\x}{\raisebox{0.3mm}{\mbox{\tiny $\times$}\hspace{-0.3mm}}}
\renewcommand{\ge}{\geqslant}
\renewcommand{\i}{\mathrm{i}}
\renewcommand{\le}{\leqslant}
\def\XXint#1#2#3{{\setbox0 = \hbox{$#1{#2#3}{ \int}$ }
\vcenter{\hbox{$#2#3$ }}\kern-.57\wd0}}
\tikzstyle arrowstyle=[scale=1]
\tikzstyle directed=[postaction={decorate,decoration={markings,
    mark=at position .65 with {\arrow[arrowstyle]{stealth}}}}]
\tikzstyle reverse directed=[postaction={decorate,decoration={markings,
    mark=at position .65 with {\arrowreversed[arrowstyle]{stealth};}}}]
\numberwithin{equation}{section}
\newtheorem{lemma}{Lemma}[section]
\newtheorem{theorem}[lemma]{Theorem}
\newtheorem{remark}[lemma]{Remark}
\begin{document}

\title{\huge\bf Fine spectral estimates with applications to the optimally fast solution of large FDE linear systems}

\author[1]{M.~Bogoya}
\author[2]{S.M.~Grudsky}
\author[3]{S.~Serra--Capizzano}
\author[4]{C.~Tablino--Possio}

\affil[1]{\scriptsize University of Insubria, Como, Italy\\
Dipartimento di Scienza e Alta Tecnologia\\
johanmanuel.bogoya@uninsubria.it}
\affil[2]{\scriptsize CINVESTAV del I.P.N., Mexico D.F., Mexico\\
Departamento de Matem\'aticas\\
Southern Federal University, Rostov-on-Don, Russia\\
Regional Mathematical Center\\
grudsky@math.cinvestav.mx}
\affil[3]{\scriptsize University of Insubria, Como, Italy\\
Dipartimento di Scienza e Alta Tecnologia\\ s.serracapizzano@uninsubria.it}
\affil[4]{\scriptsize University of Milano-Bicocca, Milano, Italy\\
Dipartamento di Matematica e Applicazioni\\ cristina.tablinopossio@unimib.it}

\maketitle

\begin{abstract}
\noindent In the present note we consider a type of matrices stemming in the context of the numerical approximation of distributed order fractional differential equations (FDEs): from one side they could look standard, since they are, real, symmetric and positive definite. On the other hand they present specific difficulties which prevent the successful use of classical tools. In particular the associated matrix-sequence, with respect to the matrix-size, is ill-conditioned and it is such that a generating function does not exists, but we face the problem of dealing with a sequence of generating functions with an intricate expression.
Nevertheless, we obtain a real interval where the smallest eigenvalue belongs, showing also its asymptotic behavior.
We observe that the new bounds improve those already present in the literature and give a more accurate spectral information, which are in fact used in the design of fast numerical algorithms for the associated large linear systems, approximating the given distributed order FDEs. Very satisfactory numerical results are presented and critically discussed, while a section with conclusions and open problems ends the current note.\\[3ex]
\textbf{Keywords:} Toeplitz sequences, algebra of matrix-sequences, generating function, fractional operators\\
\textbf{Subclass:} MSC 15B05, 15A18, 26A33
\end{abstract}

\section{Introduction}

When considering the numerical approximation of fractional differential equations (FDEs), due to the nonlocal nature of the involved operators, the matrices are intrinsically dense, even in the case where the approximation technique is of local nature (Finite Differences, Finite Elements, Finite Volumes, Isogeometric Analysis etc.); see \cite{ErRo06,HeMo14,LiWa18,MaKa18,MeTa04,PeTa11,TiZh15,WaDu13,XuDa20,ZeMa17} and references therein. The same situation is present also in the context of distributed order FDEs (see \cite{MaSe21} and references therein). When employing equispaced gridding, the resulting structures have the advantage of being of Toeplitz nature, so that the cost of a matrix-vector multiplication is almost linear i.e. of $O(n\log n)$ arithmetic operations, where $n$ is the matrix order and the constant hidden in the big $O$ is very moderate (refer to \cite{ChJi07,Ng04} and to the references there reported). These computational features, joint with the usually large dimensions of the considered linear systems, lead to the search for suitable iterative solvers: typically the most successful iterative procedures belong to the class of preconditioned Krylov methods, to the algorithms of multigrid type, or to clever combinations of them \cite{ChJi07,DoGa15,DoGa17,DoMa16,Ng04,Se02a,Se99b}.

A crucial information for an appropriate design of an efficient solver of such a kind is the precise knowledge of the asymptotical behavior of the minimal eigenvalue, which in the positive definite case is also related to the asymptotic spectral conditioning: in our setting we emphasize that the considered matrices are real, symmetric, and positive definite.

In this work, starting from \cite{BoGr21,MaSe21}, we improve the bounds present in the literature. We take into account the techniques already developed in \cite{BoBo16,BoGr98}: the additional difficulty, in the present setting, relies on the fact that the generating function is not fixed, since it depends on the matrix order $n$, which is somehow a challenging novelty with respect to the classical work on the subject \cite{BoBo16,BoGr98,Se98}.

In reality, in technical terms, we consider the real-valued symbol $f_{n}(\tht)\coloneqq\sum_{j=0}^{n-1}h^{jh}|\tht|^{2-jh}$ with $h\coloneqq\frac{1}{n}$. In this note we obtain a real interval where the smallest eigenvalue of the Toeplitz matrix $A_{n}\coloneqq T_{n}(f_{n})$ belongs to, with $f_{n}$ being the generating function of $A_{n}$, for any fixed $n$. 

Based on the spectral information, few algorithmic proposals are also discussed starting from those presented in \cite{DoMa16,DoMa18,MaSe21} and are of the type mentioned before: preconditioned conjugate gradient (PCG) algorithms, multigrid solvers (typically the V-cycle), and combinations of them
that is a V-cycle where the smoothing iterations (one step) incorporate the proposed PCG choices.

The work is organized as follows. Section~\ref{sc:setting-sol} contains the setting of the problem regarding the minimal eigenvalue of $A_{n}$ and its study and solution. Section~\ref{sc:numerics} is devoted to numerical experiments concerning the solution of large linear systems with coefficient matrix
$A_{n}$, including few evidences of the spectral clustering at one of the proposed preconditioned matrix-sequences.

\section{The problem and its solution}\label{sc:setting-sol}

Let $h\coloneqq\frac{1}{n}$ and consider the function $f_{n}\colon[-\pi,\pi]\to\bR$ given by
\begin{equation*}
f_{n}(\tht)\coloneqq\tht^{2}\sum_{j=0}^{n-1} h^{jh}|\tht|^{-jh}
=\tht^{2}\frac{1-\frac{1}{n|\tht|}}{1-\big(\frac{1}{n|\tht|}\big)^{\frac{1}{n}}}.
\end{equation*}
Note that $f_{n}$ coincides with the function $\hat F_{n}$ used in \cite{BoGr21}. We employ the notation $A_{n}\coloneqq T_{n}(f_{n})$, then $f_{n}$ is the \textit{so-called} generating function or symbol of $A_{n}$. Using that,
\begin{eqnarray*}
1-\Big(\frac{1}{n|\tht|}\Big)^{\frac{1}{n}}&=&1-\exp\Big\{-\frac{\log(n|\tht|)}{n}\Big\}\\
&=&\frac{\log(n|\tht|)}{n}\Big\{1+O\Big(\frac{|\log(|n|\tht|)|}{n}\Big)\Big\}\quad\mbox{as}\quad n\to\nf,
\end{eqnarray*}
we obtain
\begin{eqnarray*}
f_{n}(\tht)&=&\frac{n\tht^{2}}{\log(n|\tht|)}\Big\{1-\frac{1}{n|\tht|}\Big\}\Big\{1+O\Big(\frac{|\log(n|\tht|)|}{n}\Big)\Big\}\\
&=&\frac{|\tht|(n|\tht|-1)}{|\log(n|\tht|)|}\Big\{1+O\Big(\frac{|\log(n|\tht|)|}{n}\Big)\Big\}\\
&=&\frac{|\tht|(n|\tht|-1)}{\log(n|\tht|)}+O\Big(\tht^{2}+\frac{|\tht|}{n}\Big).
\end{eqnarray*}
To simplify the previous expression, we consider now the function $g$ given by
\begin{equation}\label{eq:f}
g(\si)\coloneqq\frac{\si^{2}-\si}{\log(\si)}.
\end{equation}
Then we infer the relation $nf_{n}(\tht)=g(n|\tht|)+r_{n}(\tht)$ where $r_{n}(\tht)=O(n\tht^{2}+|\tht|)$ as $n\to\nf$.

We now establish a connection with the simple-loop method \cite{BoBo15a,BoBo16}. Consider the Toeplitz matrix $T_{n}(a)$ with symbol $a\colon\bT\to\bR$ given by
\[a(t)=-\frac{1}{t}+2-t=4\sin^{2}\Big(\frac{\tht}{2}\Big),\]
where $t=\e^{\i\tht}$. For a symbol $b$, let $\la_{j}(T_{n}(b))$ and $\psi_{j}(T_{n}(b))$ be its $j$th eigenvalue and eigenfunction, respectively. Since $a$ is real-valued, its eigenvalues must be real and we arrange them in increasing order, that is
\[\la_{1}(T_{n}(a))\le\la_{2}(T_{n}(a))\le\cdots\le\la_{n}(T_{n}(a)).\]
If $\psi_{j}(T_{n}(a))$ is the eigenfunction corresponding to $\la_{j}(T_{n}(a))$, then it is well-known that (see for example \cite{BoBo16,BoGr05}) it leads to the equations
\begin{eqnarray}\label{eq:lapsi}
\la_{1}(T_{n}(a))&=&4\sin^{2}\Big(\frac{s}{2}\Big),\notag\\
\psi_{1}(T_{n}(a))(\tht)&=&\frac{c_{n}\, \e^{\frac{\i}{2}(n+1)\tht}}{(n+1)^{\frac{3}{2}}}\cdot\frac{\cos\big(\frac{(n+1)\tht}{2}\big)}{\sin\big(\frac{\tht-s}{2}\big)\sin\big(\frac{\tht+s}{2}\big)},
\end{eqnarray}
where $s\coloneqq\frac{\pi}{n+1}$ and $c_{n}$ is bounded when $n\to\nf$. The constant $c_{n}$ can be calculated from the relation $\|\psi_{1}(T_{n}(a))\|_{2}=1$, see Figure~\ref{fg:cn}.

\begin{figure}[ht]
\centering
\includegraphics[width=9cm]{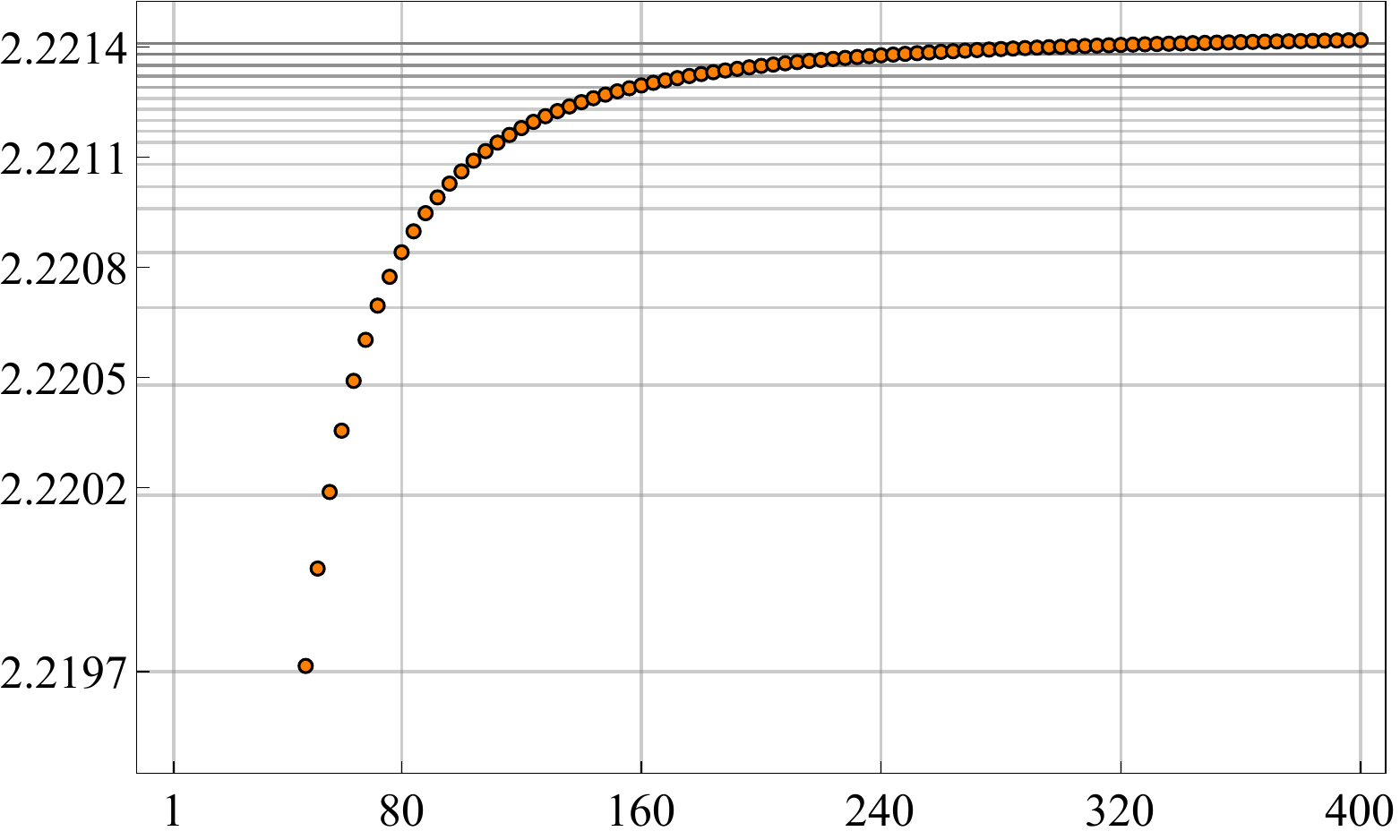}
\caption{The constant $c_{n}$ in Equation \eqref{eq:lapsi} for different values of $n$.}\label{fg:cn}
\end{figure}
\bigskip

\subsection{Upper bound for $\mathbf{\la_{1}(A_{n})}$}

\begin{theorem}\label{th:UBound}
We have
\[n\la_{1}(A_{n})\le k_{1}+O\Big(\frac{1}{n}\Big),\quad\mbox{as}\quad n\to\nf,\]
where the constant $k_{1}$ is given by
\[k_{1}=\Big\{\int_{0}^{\nf} \frac{u^{2}-u}{(u^{2}-\pi^{2})^{2}}\cdot\frac{\cos^{2}(\frac{u}{2})}{\log(u)}\dif u\Big\}\Big\{\int_{0}^{\pi}\frac{\cos^{2}(\frac{u}{2})}{(u^{2}-\pi^{2})^{2}}\dif u\Big\}^{-1},\]
and can be numerically approximated as $12.9301$.
\end{theorem}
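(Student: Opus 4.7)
The plan is to use the Rayleigh--Ritz variational bound $\lambda_{1}(A_{n}) \le \langle A_{n}v, v\rangle/\|v\|_{2}^{2}$ with test vector $v$ chosen to be the first eigenvector of the auxiliary Toeplitz matrix $T_n(a)$, whose associated trigonometric polynomial $\psi_{1}(T_{n}(a))$ is given explicitly by \eqref{eq:lapsi}. This choice is motivated by the fact that $nf_{n}(\theta)$ and $a(\theta)=4\sin^{2}(\theta/2)$ both vanish quadratically at $\theta=0$ and differ there only in higher-order behavior, so the minimizer of the quadratic form of $T_{n}(a)$ should yield an asymptotically sharp upper bound. Using the standard integral representation of the Toeplitz quadratic form and then multiplying by $n$, this gives
\[
n\lambda_{1}(A_{n}) \;\le\; \frac{\int_{-\pi}^{\pi} nf_{n}(\theta)\,|\psi_{1}(\theta)|^{2}\,\dif\theta}{\int_{-\pi}^{\pi} |\psi_{1}(\theta)|^{2}\,\dif\theta}.
\]

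The next step is to substitute the decomposition $nf_{n}(\theta) = g(n|\theta|) + r_{n}(\theta)$ with $r_{n}(\theta)=O(n\theta^{2}+|\theta|)$, already established in this section, and to perform the change of variables $u = n|\theta|$ in both numerator and denominator. From the explicit formula \eqref{eq:lapsi}, the pointwise asymptotics $(n+1)\theta/2 \to u/2$ together with $\sin((\theta\pm s)/2) \sim (u\pm\pi)/(2n)$ yield
\[
|\psi_{1}(u/n)|^{2} \;\sim\; \frac{16\,c_{n}^{2}\,n\,\cos^{2}(u/2)}{(u^{2}-\pi^{2})^{2}}.
\]
Inserting this expansion, the common factors $c_{n}^{2}$ and $n$ cancel in the ratio, so that the main term limits to a ratio whose numerator is $\int g(u)\cos^{2}(u/2)/(u^{2}-\pi^{2})^{2}\,\dif u = \int (u^{2}-u)\cos^{2}(u/2)/[(u^{2}-\pi^{2})^{2}\log u]\,\dif u$ and whose denominator is the corresponding normalizing integral of $\cos^{2}(u/2)/(u^{2}-\pi^{2})^{2}$, recovering exactly $k_{1}$.

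For the remainder $r_{n}$, the same change of variables gives $n\theta^{2}=u^{2}/n$ and $|\theta|=u/n$, so the contribution is $\int r_{n}|\psi_{1}|^{2}\,\dif\theta/\int|\psi_{1}|^{2}\,\dif\theta = O(1/n)$, the remaining integrals of $u^{j}\cos^{2}(u/2)/(u^{2}-\pi^{2})^{2}$ for $j=1,2$ being absolutely convergent (the integrand is bounded near $u=\pi$ since $\cos^{2}(u/2)$ cancels the double zero of the denominator, and decays like $1/u^{2}$ at infinity).

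The main obstacle will be upgrading the pointwise asymptotics for $|\psi_{1}(u/n)|^{2}$ to a uniform estimate over $u\in[0,n\pi]$. My plan is to split the interval into a bulk region $[0,M]$, where pointwise convergence combined with an integrable dominating function justifies the limit by dominated convergence, and a tail $[M,n\pi]$, where the crude bound $|\psi_{1}(\theta)|^{2}\lesssim 1/(n^{3}\sin^{4}(\theta/2))$ extracted from \eqref{eq:lapsi} shows that the contributions to both numerator and denominator vanish as $M\to\nf$. A secondary technical point is the relative error $O(\theta^{2})=O(u^{2}/n^{2})$ coming from replacing $\sin((\theta\pm s)/2)$ by its leading term; integrated against the decaying kernel this gives an additional $O(1/n)$ correction, compatible with the rate claimed in the theorem.
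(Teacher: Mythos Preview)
Your proposal is correct and follows essentially the same route as the paper: the Rayleigh--Ritz bound with the first eigenvector $\psi_{1}(T_{n}(a))$ as test function, the decomposition $nf_{n}=g(n|\theta|)+r_{n}(\theta)$, the rescaling $u=n|\theta|$, and the identification of the limiting ratio of integrals as $k_{1}$, with the $r_{n}$ contribution producing the $O(1/n)$ remainder. Your handling of the tail and of the dominated-convergence justification is actually more explicit than the paper's, which argues via the informal $\sim$ notation without isolating the bulk/tail split.
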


\begin{proof}
Let $\langle\cdot,\cdot\rangle$ be the inner product of the Hilbert space $L^{2}(\bT)$. Note that the essential ranges of the symbols $n^{2} a$ and $n f_{n}$ have a similar behavior in an interval of the type $\big[0,O\big(\frac{1}{n}\big)\big]$.
Hence using the well-known formula
\begin{equation*}
\la_{1}(A_{n})=\inf\{\langle A_{n}\psi,\psi\rangle\colon\|\psi\|_{2}=1\},
\end{equation*}
we obtain
\begin{eqnarray}\label{eq:laupL}
\la_{1}(A_{n})&\le&\langle A_{n}\psi_{1}(A_{n}),\psi_{1}(A_{n})\rangle\notag\\
&=&\frac{1}{2\pi}\int_{-\pi}^{\pi} f_{n}(\tht)|\psi_{1}(A_{n})(\tht)|^{2}\dif\tht\notag\\
&=&\cI_{1}(n)+\cI_{2}(n),
\end{eqnarray}
where
\begin{eqnarray*}
\cI_{1}(n)&\coloneqq&\frac{1}{2\pi n}\int_{-\pi}^{\pi} g(n|\tht|)|\psi_{1}(A_{n})(\tht)|^{2}\dif\tht,\\
\cI_{2}(n)&\coloneqq&\frac{1}{2\pi n}\int_{-\pi}^{\pi}r_{n}(\tht)|\psi_{1}(A_{n})(\tht)|^{2}\dif\tht.
\end{eqnarray*}
It is easy to see that $\lim_{n\to\nf}n\,\cI_{1}(n)<\nf$, indeed
\begin{eqnarray}\label{eq:laupL1}
n\,\cI_{1}(n)&=&\frac{c_{n}^{2}}{2\pi(n+1)^{3}}\int_{-\pi}^{\pi} g(n|\tht|)\cdot\frac{\cos^{2}\big(\frac{(n+1)\tht}{2}\big)}{\sin^{2}\big(\frac{\tht-s}{2}\big)\sin^{2}\big(\frac{\tht+s}{2}\big)}\dif\tht\notag\\
&\sim&\frac{16nc_{n}^{2}}{\pi}\int_{0}^{\pi} g(n\tht)\cdot\frac{\cos^{2}\big(\frac{n\tht}{2}\big)}{((n\tht)^{2}-\pi^{2})^{2}}\dif\tht\notag\\
&\sim&\frac{16c_{n}^{2}}{\pi}\int_{0}^{\nf} \frac{u^{2}-u}{(u^{2}-\pi^{2})^{2}}\cdot\frac{\cos^{2}\big(\frac{u}{2}\big)}{\log(u)}\dif u.
\end{eqnarray}
Here the notation $f\sim g$ means $\lim_{n\to\nf}\frac{f(n)}{g(n)}=1$. Using \eqref{eq:lapsi} a similar calculation produces
\begin{eqnarray}\label{eq:cnotn}
c_{n}^{-2}&\sim&\frac{16n}{\pi}\int_{0}^{\pi}\frac{\cos^{2}\big(\frac{n\tht}{2}\big)}{((n\tht)^{2}-\pi^{2})^{2}}\dif\tht\notag\\
&=&\frac{16}{\pi}\int_{0}^{\nf}\frac{\cos^{2}\big(\frac{u}{2}\big)}{(u^{2}-\pi^{2})^{2}}\dif u.
\end{eqnarray}
Note that the last integral does not depend on $n$, its value can be numerically found producing $c_{n}\approx2.2214$, which agrees with the Figure \ref{fg:cn}. For the second integral in \eqref{eq:laupL} we recall that $r_{n}(\tht)=O(n\tht^{2}+|\tht|)$ and write $\cI_{2}(n)=O(\Ph(n))$ where
\begin{eqnarray*}
\Ph(n)&=&\frac{1}{n^{2}}\int_{-\pi}^{\pi} \hspace{-2mm}\{(n|\tht|)^{2}-n|\tht|\}|\psi_{1}(A_{n})(\tht)|^{2}\dif\tht\\
&=&\frac{c_{n}^{2}}{n^5}\int_{-\pi}^{\pi} \hspace{-2mm}\{(n|\tht|)^{2}-n|\tht|\}\frac{\cos^{2}\big(\frac{n|\tht|}{2}\big)}{((n|\tht|)^{2}-\pi^{2})^{2}}\dif\tht\\
&\sim&\frac{16c_{n}^{2}}{n^{2}}\int_{0}^{\nf} \frac{u^{2}-u}{(u^{2}-\pi^{2})^{2}}\cos\Big(\frac{u}{2}\Big)\dif u,
\end{eqnarray*}
which combined with \eqref{eq:cnotn} produces
\begin{equation}\label{eq:laupL2}
\cI_{2}(n)=O\Big(\frac{1}{n^{2}}\Big).
\end{equation}
Finally, combining \eqref{eq:laupL}, \eqref{eq:laupL1}, \eqref{eq:cnotn}, and \eqref{eq:laupL2} we obtain the thesis.
\end{proof}

\subsection{Lower bound for $\mathbf{\la_{1}(A_{n})}$}

In this part we will implement the trick used in \cite{BoGr98} which works as follows. Let $b\in C(\bT)$ and $q_{n}\colon\bT\to\bC$ given by
\begin{equation}\label{eq:hn}
q_{n}(t)\coloneqq\sum_{j=n}^{\nf} q_{j,n} t^j+\sum_{j=n}^{\nf} q_{-j,n}t^{-j}.
\end{equation}
Since $T_{n}(q_{n})$ is the zero matrix we clearly have $T_{n}(a)=T_{n}(a+q_{n})$, thus instead of working with $T_{n}(a)$ we can use $T_{n}(a+q_{n})$ which under a swiftly selected symbol $q_{n}$, can be advantageous.

\begin{theorem}\label{th:LBound}
We have
\[n\la_{1}(A_{n})\ge k_{2}+O\Big(\frac{1}{n}\Big),\quad\mbox{as}\quad n\to\nf,\]
where the constant $k_{2}$ is given by
\[k_{2}=\frac{1}{\pi}\int_{0}^{\pi}\frac{\si^{2}-\si}{\log(\si)}\dif\si,\]
which can be approximated numerically as $2.2945$.
\end{theorem}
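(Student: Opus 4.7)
The plan is to carry out the $q_n$-trick just described. Since $T_{n}(q_{n})=0$ whenever $q_{n}$ has the form \eqref{eq:hn}, one has
\[
\la_{1}(A_{n})=\la_{1}(T_{n}(f_{n}+q_{n}))\ge\min_{\tht\in[-\pi,\pi]}(f_{n}+q_{n})(\tht),
\]
so it suffices to produce a single $q_{n}$ of that form satisfying $\min(f_{n}+q_{n})\ge k_{2}/n+O(1/n^{2})$.

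The natural candidate is $q_{n}\coloneqq Q_{n}-\hat Q_{n}(0)$, where $Q_{n}$ is the $2\pi/n$-periodic function on $\bT$ that coincides with $-f_{n}$ on the fundamental interval $[-\pi/n,\pi/n]$; by evenness of $f_{n}$,
\[
-\hat Q_{n}(0)=\frac{n}{\pi}\int_{0}^{\pi/n}f_{n}(\tau)\dif\tau.
\]
Because $Q_{n}$ has period $2\pi/n$, its Fourier coefficients are supported on $n\bZ$; subtracting the mean removes the constant term, leaving only frequencies $\pm n,\pm 2n,\ldots$, so $q_{n}$ matches \eqref{eq:hn}.

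For the pointwise estimate I would first record the monotonicity of $f_{n}$ in $|\tht|$: from the series $f_{n}(\tht)=\sum_{j=0}^{n-1}h^{jh}|\tht|^{2-jh}$ with $h=1/n$, every exponent $2-jh$ lies in $[1+1/n,2]$, so each summand is non-decreasing on $[0,\pi]$. Given $\tht\in[-\pi,\pi]$, write $\tht=\tau+2\pi k/n$ with $\tau\in[-\pi/n,\pi/n]$ and $|k|\le n/2$; when $k\ne0$ the triangle inequality gives $|\tht|\ge 2\pi/n-|\tau|\ge\pi/n\ge|\tau|$, and when $k=0$ equality holds. Hence $f_{n}(\tht)\ge f_{n}(\tau)$, and since $Q_{n}(\tht)=-f_{n}(\tau)$,
\[
(f_{n}+q_{n})(\tht)=f_{n}(\tht)-f_{n}(\tau)-\hat Q_{n}(0)\ge-\hat Q_{n}(0).
\]

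Finally, the asymptotic $nf_{n}(\tau)=g(n\tau)+O(n\tau^{2}+|\tau|)$ obtained earlier, combined with the substitution $\si=n\tau$, yields
\[
-\hat Q_{n}(0)=\frac{1}{\pi n}\int_{0}^{\pi}g(\si)\dif\si+O\Big(\frac{1}{n^{2}}\Big)=\frac{k_{2}}{n}+O\Big(\frac{1}{n^{2}}\Big),
\]
which gives $n\la_{1}(A_{n})\ge k_{2}+O(1/n)$. The essential step (and the only genuine obstacle) is guessing $q_{n}$ so that it both lives in the subspace prescribed by \eqref{eq:hn} and lifts the minimum of $f_{n}$ to the target level $k_{2}/n$; everything else reduces to the one-line monotonicity check and the change of variables $\si=n\tau$.
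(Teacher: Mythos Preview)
Your argument is correct and follows the same idea as the paper: add to the symbol a $2\pi/n$-periodic, mean-zero function so that the Toeplitz matrix is unchanged, choose that function to flatten the symbol on the fundamental interval $[-\pi/n,\pi/n]$ to its average, and then use monotonicity to see that the modified symbol is bounded below by that average.

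The one noteworthy difference is in execution. The paper applies the trick to the asymptotic symbol $g(n|\tht|)$, takes $p(\si)=k_{2}-g(\si)$ (extended $2\pi$-periodically in $\si$), and uses the monotonicity of $g$; the remainder $r_{n}$ is then absorbed by first restricting the infimum to a bounded window $[0,M]$ before letting the $O(1/n)$ error appear. You instead flatten the \emph{exact} symbol $f_{n}$ and invoke the exact monotonicity of $f_{n}$ (via the series representation), which makes the inequality $(f_{n}+q_{n})(\tht)\ge -\hat Q_{n}(0)$ hold on all of $[-\pi,\pi]$ without any asymptotic approximation; the asymptotics enter only at the very end, in computing the average $-\hat Q_{n}(0)=k_{2}/n+O(1/n^{2})$. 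This sidesteps the paper's intermediate restriction to $[0,M]$ and is arguably cleaner, but the underlying strategy is the same.
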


\begin{proof}
In our case instead of working with the symbol $nf_{n}$ we will use
\begin{eqnarray*}
g_{n}(\tht)&\coloneqq&nf_{n}(\tht)+p(n|\tht|),\\
&=&g(n|\tht|)+p(n|\tht|)+r_{n}(\tht),
\end{eqnarray*}
where $p(\si)\coloneqq\sum_{j=1}^{\nf} p_j\cos(\si j)$, $p_j$ are real constants, and $r_{n}$ is given in \eqref{eq:f}. Let $t=\e^{\i\tht}$, then we can write
\[p(n|\tht|)=p(n\tht)=\sum_{j=1}^{\nf} \frac{p_j}{2} (t^{nj}+t^{-nj}),\]
which clearly shows the form \eqref{eq:hn}. Additionally, the function $p$ is $2\pi$-periodic, even, and satisfies $\int_{0}^{\pi}p(\si)\dif\si=0$.
We thus deduce $T_{n}(p(n|\tht|))=0$, and hence
\begin{eqnarray*}
T_{n}(nf_{n})&=&T_{n}(g_{n})\\
&=&T_{n}(g(n|\tht|))+T_{n}(p(n|\tht|))+T_{n}(r_{n})\\
&=&T_{n}(g(n|\tht|))+T_{n}(r_{n}).
\end{eqnarray*}
Keeping in mind that, for any real-valued symbol $\be$, the smallest eigenvalue of $T_{n}(\be)$ must be greater or equal to the infimum of $\be$, we obtain
\begin{eqnarray*}
n\la_{1}(A_{n})&\ge&\inf\{g_{n}(\tht)\colon \tht\in[-\pi,\pi]\}\\
&=&\inf\{g(n|\tht|)+p(n|\tht|)+r_{n}(\tht)\colon \tht\in[-\pi,\pi]\}\\
&=&\inf\Big\{g(\si)+p(\si)+O\Big(\frac{\si^{2}+\si}{n}\Big)\colon \si\in[0,n\pi]\Big\},
\end{eqnarray*}
where in the last line we used the variable change $\si=n|\tht|$. 
Then we obtain
\[n\la_{1}(A_{n})\ge\inf\Big\{g(\si)+p(\si)+O\Big(\frac{\si^{2}+\si}{n}\Big)\colon \si\in[0,M]\Big\},\]
for a sufficiently large constant $M$. Thus we have
\begin{equation}\label{eq:lb}
n\la_{1}(A_{n})\ge\inf\{g(\si)+p(\si)\colon \si\in[0,M]\}+O\Big(\frac{1}{n}\Big).
\end{equation}
In order to obtain a neat lower bound for $\la_{1}(A_{n})$, we need to choose the coefficients $p_j$ in such a way that
\[m\coloneqq \inf_{0\le\si\le M}\{g(\si)+p(\si)\}\]
be maximal. Since $p$ is $2\pi$-periodic and $g$ is strictly increasing with $g(0)=0$ and $g(\nf)=\nf$, the infimum of $g+p$ must be in the interval $[0,\pi]$, and consequently we infer that
\[m=\inf_{0\le\si\le\pi}\{g(\si)+p(\si)\}.\]
Consider the integral
\[k_{2}\coloneqq \frac{1}{\pi}\int_{0}^{\pi} \{g(\si)+p(\si)\}\dif\si=\frac{1}{\pi}\int_{0}^{\pi} g(\si)\dif\si,\]
which satisfies $k_{2}\ge m$, and take $p$ as
\[p(\si)=\begin{cases}
k_{2}-g(\si),&\si\in[-\pi,\pi];\\
k_{2}-g(\si-2\pi j),&\si\in[\pi j,\pi(j+2)],\quad j\in\bZ.
\end{cases}\]
It is immediate that the function $p$ is $2\pi$-periodic, $[g(\si)+p(\si)]_{\si\in[-\pi,\pi]}\equiv k_{2}$, $g(\si)+p(\si)\ge k_{2}$ for $\si\notin[-\pi,\pi]$, which implies $m=k_{2}$.
which combined with \eqref{eq:lb} proves the theorem.
\end{proof}

Finally, combining the Theorems \ref{th:UBound} and \ref{th:LBound} we obtain
\[2.2945\approx k_{2}\le n\la_{1}(A_{n}) \le k_{1}\approx 12.9301,\]
for every sufficiently large $n$.

\begin{remark}
In this remark we give a specific account on a comparison of the obtained results with respect to those available in the literature and which amount so far to only two works \cite{BoGr21,MaSe21}: our bounds are more precise as emphasized in the next section.
\end{remark}

\section{Few selected numerical experiments}\label{sc:numerics}

The current section is divided into two parts. In the first we discuss our theoretical results, regarding the bounds on the minimal eigenvalue of $A_{n}$, as a function of the matrix order $n$, also in comparison with the bounds present in the literature.
Regarding \cite{MaSe21}, based on \cite{Se95}, the only relevant observation is that the minimal eigenvalue of $T_{n}^{-1}(|\tht|^{2})h^\al T_{n}(|\tht|^{2-\al})$ is well separated from zero, for any choice of $\al \in (0,2)$, and this provides a qualitative indication that the minimal eigenvalue of ${A_{n} \over n}$ converges to zero with a speed of ${1 \over n^{2}}$.
Concerning \cite{BoGr21}, the latter claim is indeed proved formally, but the constants are not computed, while in the present note we  improve the findings, by determining quite precise lowerbounds and upperbounds
(see Figure \ref{fg:bounds}).

\begin{figure}[ht]
\centering
\includegraphics[width=75mm]{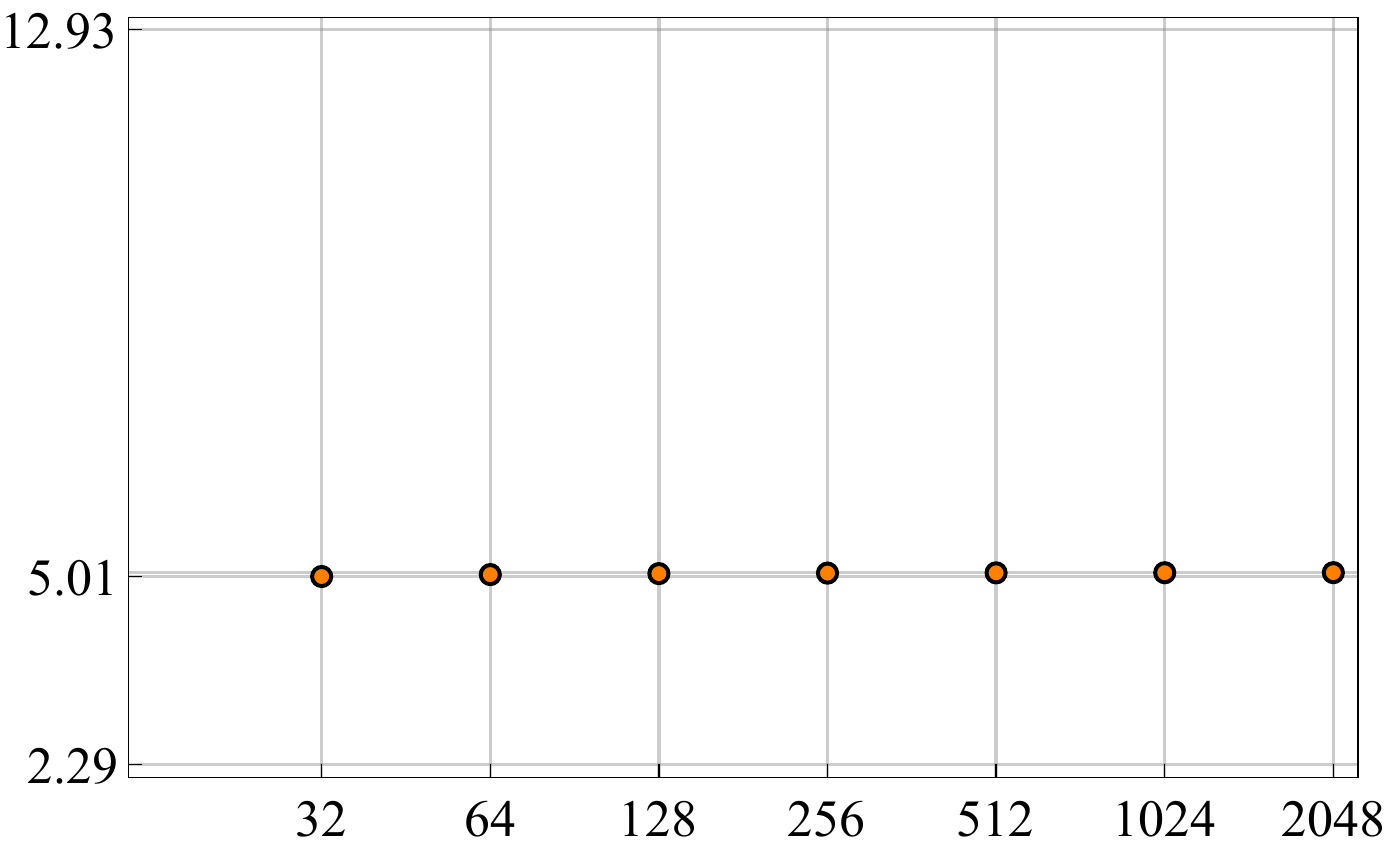}\hspace{3mm}
\includegraphics[width=75mm]{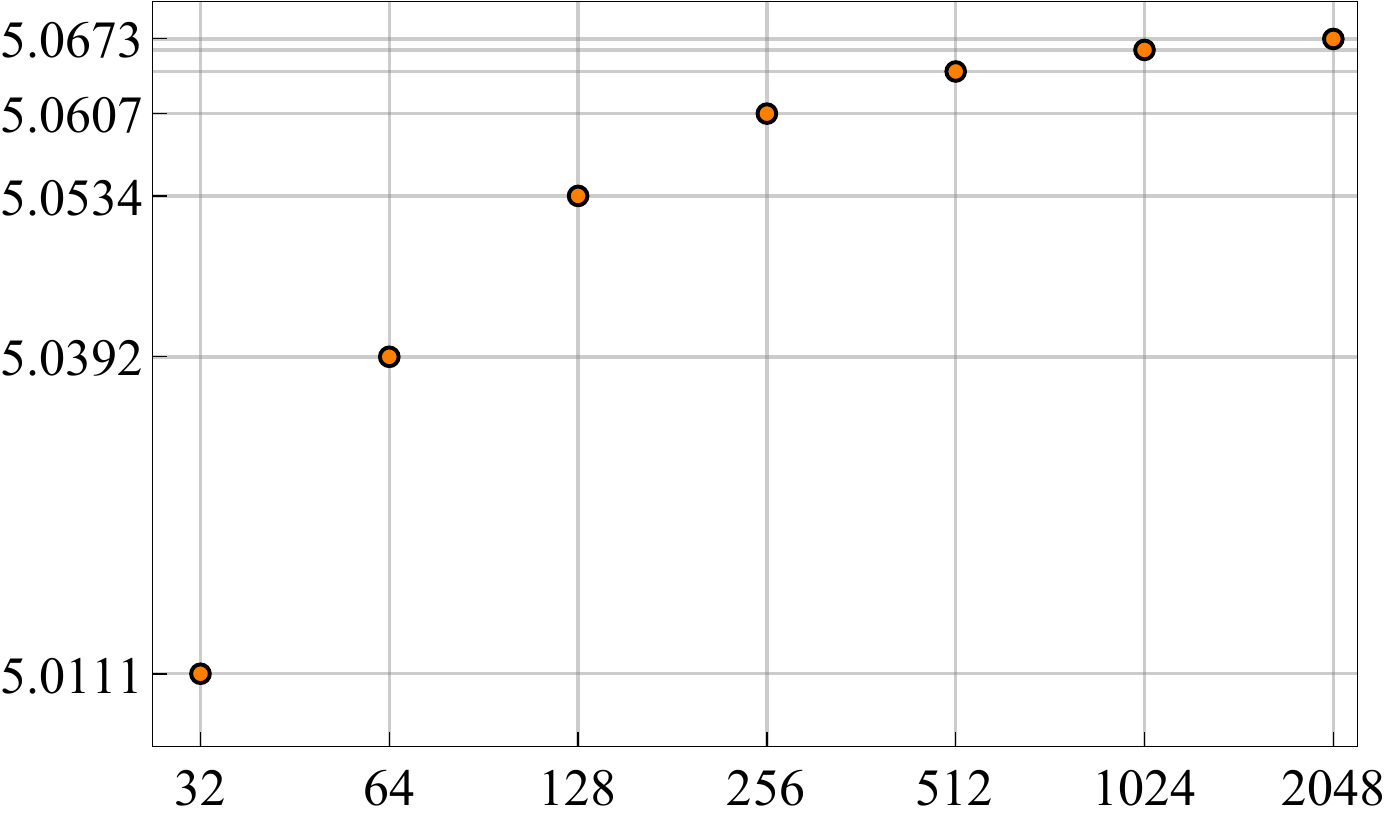}
\caption{The normalized minimum eigenvalue $n\la_{1}(A_{n})$ of $A_{n}$ for different values of $n$. The left image includes the lower and upper bounds given by Theorems \ref{th:LBound} and \ref{th:UBound}, respectively, while the right image shows a standard data range.}\label{fg:bounds}
\end{figure}

The second part is devoted to exploit the spectral information for the use and the design of specialized iterative algorithms, when solving large linear systems having ${A_{n} \over n}$ as coefficient matrix. In Table~\ref{tb:PCG} we employ the standard conjugate gradient (CG), since the coefficient matrix is positive definite, but we do not expect optimality due to the ill-conditioning of the related matrix-sequence. We then use the preconditioned conjugate gradient (PCG) with Strang and Frobenius optimal preconditioners (see \cite{ChJi07,DiBSe99,Ng04,Se99a} taking the preconditioner in the algebra of the circulant matrices and in the algebra of sine transforms, containing the Finite Difference discrete Laplacian $T_{n}(2-2\cos\tht)$).
Also in this case it is nontrivial to obtain optimality, due to the ill-conditioning of the involved matrices, which grows to infinity as the matrix size tends to infinity.

Few observations are in order: the considered matrix $A_{n}$ is real, symmetric, and Toeplitz and the $\tau$ algebra is again inherently real, symmetric, and with Toeplitz generator \cite{DiBSe99}. The latter statement represents a qualitative explanation of the fact that the $\tau$ preconditioners perform substantially better than the analogs in the circulant algebra (see Table \ref{tb:PCG}): the theoretical ground to the preceding qualitative remark relies in the notion of good algebras developed in \cite{Se99c}.

Notice that the tridiagonal $\tau$ discrete Finite Difference Laplacian $\Delta_{n}$ is optimal: the related linear system solution is extremely cheap both in a sequential model (via the Thomas algorithm) and in a parallel model of computation (via e.g. classical Multigrid methods \cite{HaWo85}). We also notice the remarkable difference between the performance of the optimal $\tau$ preconditioner and of the optimal circulant preconditioner.
The reason is spectral (plus the good algebra argument \cite{Se99c}). The minimal eigenvalue of the optimal circulant preconditioner is averaged, due to a Ces\`aro sum effect \cite{DiBSe99,Se99a}, and hence it behaves as   
${1\over n}$ (instead of $1\over n^{2}$) and this explains the reason why the number of iterations grows as $\sqrt n$, in the light of the classical results on conjugate gradient methods \cite{AxLi86}.  On the other hand, the optimal $\tau$ preconditioner matches very well the extremal eigenvalues of $A_{n}$ (see e.g. \cite{DiBSe99}). 

Thus, as a conclusion, we deduce that the preconditioners $C_{S,n}$ (Strang-Circulant), $\tau_{N,n}$ (Natural-$\tau$), $\tau_{F,n}$ (Frobenius-Optimal $\tau$), and $\Delta_{n}$ (Discrete FD Laplacian) are all optimal in the sense that the iteration count is bounded by constants independent of the matrix size $n$ and the cost per iteration is that of the Fast Fourier Transform, which amounts to $O(n\log n)$ arithmetic operations (for a formal notion of optimality see \cite{AxNe94}). 
As the data indicate, the preconditioners $\tau_{N,n}$, $\tau_{F,n}$ are the best, but also $\Delta_{n}$ is of interest given its sparsity.

In Table~\ref{tb:eig_mMC}-Table~\ref{tb:eig_mML} the minimal and maximal eigenvalues of the considered preconditioned matrices are reported to highlight the efficiency of the proposed preconditioner. For sure, both the data regarding the preconditioners $\tau_{N,n}$ and $\tau_{F,n}$ deserve further attention. The outliers analysis reported in Tables~\ref{tb:out1} and~\ref{tb:out2}, respectively, seems to show a strong cluster at $1$. For sure, a weak clustering is observed, being the outliers percentage decreasing as long as $n$ increases: we notice that the weak clustering can be deduced theoretically using the GLT theory \cite{GaSe17}, while the strong clustering is nontrivial given the ill-conditioning of $A_{n}$.

The last choice is a multigrid method designed with the use of the spectral information available. In fact, the projector and the restriction operators are those classically used when dealing with the standard discrete Laplacian $\Delta_{n}=T_{n}(2-2\cos\tht)$. Indeed, even if $A_{n} \over n$ is dense and seemingly there are no similarities with $\Delta_{n}$, from a spectral point of view both matrix-sequences have minimal eigenvalue collapsing to zero as $1\over n^{2}$, up to some moderate constants.

More precisely, the transfer operators are designed, in a very standard way, as the product of the Toeplitz matrix generated by the symbol $2+2\cos\tht$ and a proper cutting matrix (see e.g. \cite{FiSe92,HaWo85}). A pure algebraic multigrid is considered, so that matrices at coarser levels are obtained by projection via the transfer operators according to the Galerkin condition.
When setting smoothers, we tested several choices by considering a standard Gauss-Seidel iteration, or PCG with sine transform preconditioners, according to the natural approach and the Frobenius optimal choice: $\nu_{\textrm{pre}}$ steps are applied for the presmoother and $\nu_{\textrm{post}}$ steps for the postsmoother, respectively.

As it can be seen in Table~\ref{tb:MGM}, the numerical results with the multigrid choice are fully satisfactory, as well: the method is optimal \cite{AxNe94} in the sense that the iteration count is bounded by a constant independent of $n$ and the cost per iteration is proportional to that of the matrix-vector product. \\
A special mention deserves a last test in which we set as presmoother the PCG with Finite Difference discrete Laplacian preconditioner with $\nu_{\textrm{pre}}=1$, and as postsmoother the PCG with Natural-$\tau$ or Frobenius-Optimal $\tau$ preconditioner with $\nu_{\textrm{post}}=1$, but only at the finest level. In all the coarser levels the smoothers are simply chosen as one iteration of the standard Gauss-Seidel iteration, so further reducing the computational cost.
The number of required iterations is $3$ as for the case $\ga$.\\
Though the efficiency is comparable with those of the preconditioned PCG in Table~\ref{tb:PCG} in the present unilevel setting, multigrid could become the most promising choice in the multilevel one, due to the theoretical barriers studied in \cite{NoSe04,Se02b,SeTy99}, regarding the non optimality of the PCG with matrix-algebra preconditioners in the multilevel context.

\begin{table}[ht]
\centering
{\footnotesize\begin{tabular}{|r|rrrrrr|}
\hline
Size & $I_{n}$ & $C_{S,n}$ & $C_{F,n}$ & $\tau_{N,n}$ & $\tau_{F,n}$ & $\Delta_{n}$ \\ \hline
 32 & 34 & 7 & 11 & 6 & 5 & 8 \\
 64 & 73 & 8 & 14 & 5 & 5 & 9 \\
 128 & 154 & 8 & 15 & 5 & 5 & 10 \\
 256 & 307 & 8 & 18 & 5 & 5 & 10 \\
 512 & 593 & 8 & 22 & 5 & 5 & 11 \\
 1024 & 1095 & 8 & 27 & 5 & 5 & 11 \\
 2048 & 2112 & 8 & 34 & 5 & 5 & 12 \\
\hline
\end{tabular}}
\caption{Number of PCG's iterations to reach convergence with respect to scaled residual less than
$10^{-7}$ with preconditioner $I_{n}$ (no preconditioning), $C_{S,n}$ (Strang-Circulant), $C_{F,n}$ (Frobenius-Optimal Circulant), $\tau_{N,n}$ (Natural-$\tau$), $\tau_{F,n}$ (Frobenius-Optimal $\tau$), and
$\Delta_{n}$ (Discrete FD Laplacian).}\label{tb:PCG}
\end{table}

\begin{table}[ht]
\centering
{\footnotesize\begin{tabular}{|R{6mm}|R{24mm}R{24mm}|R{24mm}R{24mm}|}
\hline
Size &  $\la_{\min}(C_{S,n}^{-1}A_{n})$ & $\la_{\max}(C_{S,n}^{-1}A_{n})$
 & $\la_{\min}(C_{F,n}^{-1}A_{n})$ & $\la_{\max}(C_{F,n}^{-1}A_{n})$ \\
 \hline
$32$ & $5.2258\x10^{-1}$ & $3.4325\x10^{1}$ & $1.7524\x10^{-1}$ & $4.1234\x10^{0}$ \\
$64$ & $5.1302\x10^{-1}$ & $5.4268\x10^{1}$ & $1.1034\x10^{-1}$ & $5.6639\x10^{0}$ \\
$128$ & $5.0743\x10^{-1}$ & $9.1152\x10^{1}$ & $6.6754\x10^{-2}$ & $7.8412\x10^{0}$ \\
$256$ & $5.0419\x10^{-1}$ & $1.5814\x10^{2}$ & $3.9098\x10^{-2}$ & $1.0911\x10^{1}$ \\
$512$ & $5.0234\x10^{-1}$ & $2.8003\x10^{2}$ & $2.2338\x10^{-2}$ & $1.5232\x10^{1}$ \\
$1024$ & $5.0129\x10^{-1}$ & $5.0319\x10^{2}$ & $1.2526\x10^{-2}$ & $2.1315\x10^{1}$ \\
$2048$ & $5.0071\x10^{-1}$ & $9.1428\x10^{2}$ & $6.9259\x10^{-3}$ & $2.9876\x10^{1}$ \\
\hline
\end{tabular}}
\caption{Minimal and maximal eigenvalues of preconditioned matrices with preconditioner $C_{S,n}$ (Strang-Circulant) and $C_{F,n}$ (Frobenius-Optimal Circulant). }\label{tb:eig_mMC}
\end{table}

\begin{table}[ht]
\centering
{\footnotesize\begin{tabular}{|R{6mm}|R{24mm}R{24mm}|R{24mm}R{24mm}|}
\hline
Size
 & $\la_{\min}(\tau_{N,n}^{-1}A_{n})$ & $\la_{\max}(\tau_{N,n}^{-1}A_{n})$
 & $\la_{\min}(\tau_{F,n}^{-1}A_{n})$ & $\la_{\max}(\tau_{F,n}^{-1}A_{n})$ \\
\hline
$32$   &  $8.1574\x10^{-1}$ & $1.1475\x10^{0}$ & $9.3206\x10^{-1}$ & $1.1356\x10^{0}$\\
$64$   &  $8.0608\x10^{-1}$ & $1.1527\x10^{0}$ & $9.0938\x10^{-1}$ & $1.1463\x10^{0}$\\
$128$  &  $7.9895\x10^{-1}$ & $1.1584\x10^{0}$ & $8.8984\x10^{-1}$ & $1.1551\x10^{0}$ \\
$256$  &  $7.9396\x10^{-1}$ & $1.1641\x10^{0}$ & $8.7441\x10^{-1}$ & $1.1624\x10^{0}$ \\
$512$  &  $7.9063\x10^{-1}$ & $1.1693\x10^{0}$ & $8.6271\x10^{-1}$ & $1.1685\x10^{0}$ \\
$1024$ &  $7.8851\x10^{-1}$ & $1.1742\x10^{0}$ & $8.5401\x10^{-1}$ & $1.1737\x10^{0}$ \\
$2048$ &  $7.8727\x10^{-1}$ & $1.1785\x10^{0}$ & $8.4763\x10^{-1}$ & $1.1783\x10^{0}$ \\
\hline
\end{tabular}}
\caption{Minimal and maximal eigenvalues of preconditioned matrices with preconditioner  $\tau_{N,n}$ (Natural-$\tau$) and $\tau_{F,n}$ (Frobenius-Optimal $\tau$). }\label{tb:eig_mMtau}
\end{table}

\begin{table}[ht]
\centering
{\footnotesize\begin{tabular}{|R{6mm}|R{24mm}R{24mm}|}
\hline
Size & $\la_{\min}(\Delta_{n}^{-1}A_{n})$ & $\la_{\max}(\Delta_{n}^{-1}A_{n})$\\
 \hline
$32$  & $3.1941\x10^{-1}$ & $5.4802\x10^{-1}$\\
$64$  & $2.6529\x10^{-1}$ & $5.3509\x10^{-1}$\\
$128$ & $2.2651\x10^{-1}$ & $5.2979\x10^{-1}$\\
$256$ & $1.9750\x10^{-1}$ & $5.2727\x10^{-1}$\\
$512$ & $1.7504\x10^{-1}$ & $5.2604\x10^{-1}$\\
$1024$ & $1.5713\x10^{-1}$ & $5.2543\x10^{-1}$\\
$2048$ & $1.4252\x10^{-1}$ & $5.2513\x10^{-1}$\\
\hline
\end{tabular}}
\caption{Minimal and maximal eigenvalues of preconditioned matrices with preconditioner $\Delta_{n}$ (Discrete FD Laplacian). }\label{tb:eig_mML}
\end{table}

\begin{table}[ht]
\centering
{\footnotesize\begin{tabular}{|r|rrr|rrr|}
\hline
\multirow{2}{*}{Size} & \multicolumn{3}{c|}{$\eps=10^{-1}$} & \multicolumn{3}{c|}{$\eps=10^{-2}$} \\ 
 \cline{2-7} 
 & $n_{\textrm{out}}^l$ & $n_{\textrm{out}}^r$ & $\%$ & $n_{\textrm{out}}^l$ & $n_{\textrm{out}}^r$ & $\%$ \\ \hline
32 & 1 & 2 & 9.37\% & 3 & 4 & 21.8\% \\
64 & 1 & 2 & 4.69\% & 3 & 4 & 10.9\% \\
128 & 1 & 2 & 2.34\% & 4 & 4 & 6.25\% \\
256 & 2 & 2 & 1.56\% & 5 & 4 & 3.52\% \\
512 & 2 & 2 & 0.78\% & 5 & 4 & 1.76\% \\
1024 & 2 & 2 & 0.39\% & 5 & 4 & 0.88\% \\
2048 & 2 & 2 & 0.19\% & 6 & 4 & 0.49\% \\
\hline
\end{tabular}}
\caption{Number of left ($n_{\textrm{out}}^l$) and right ($n_{\textrm{out}}^r$) outliers (eigenvalues not belonging to $(1 -\eps, 1 + \eps)$ with $\eps=10^{-1}$ and $10^{-2}$ and their percentage with respect to the dimension in the case of Natural-$\tau$ preconditioner $\tau_{N,n}$.}\label{tb:out1}
\end{table}

\begin{table}[ht]
\centering
{\footnotesize\begin{tabular}{|r|rrr|rrr|}
\hline
\multirow{2}{*}{Size} & \multicolumn{3}{c|}{$\eps=10^{-1}$} & \multicolumn{3}{c|}{$\eps=10^{-2}$} \\
 \cline{2-7}
  & $n_{\textrm{out}}^l$ & $n_{\textrm{out}}^r$ & $\%$ & $n_{\textrm{out}}^l$ & $n_{\textrm{out}}^r$ & $\%$ \\
\hline
32 & 0 & 2 & 6.25\% & 18 & 4 & 68.7\% \\
64 & 0 & 2 & 3.13\% & 2 & 6 & 12.5\% \\
128 & 1 & 2 & 2.34\% & 2 & 9 & 8.59\% \\
256 & 1 & 2 & 1.17\% & 3 & 10 & 5.08\% \\
512 & 2 & 2 & 0.78\% & 4 & 10 & 2.73\% \\
1024 & 2 & 2 & 0.39\% & 4 & 10 & 1.36\% \\
2048 & 2 & 2 & 0.19\% & 4 & 10 & 0.68\% \\
\hline
\end{tabular}}
\caption{Number of left ($n_{\textrm{out}}^l$) and right ($n_{\textrm{out}}^r$) outliers (eigenvalues not belonging to $(1 -\eps, 1 + \eps)$ with $\eps=10^{-1}$ and $10^{-2}$ and their percentage with respect the dimension in the case of Frobenius-Optimal $\tau$ preconditioner $\tau_{F,n}$.}\label{tb:out2}
\end{table}

\begin{table}[ht!]
\centering
{\footnotesize\begin{tabular}{|r|rr|rr|rr|rr|}
\hline
\multirow{2}{*}{Size} & \multicolumn{2}{c|}{Case $\al$} & \multicolumn{2}{c|}{Case $\be$} & \multicolumn{2}{c|}{Case $\ga$} & \multicolumn{2}{c|}{Case $\de$} \\
\cline{2-9}
 & TGM & Vcycle & TGM & Vcycle & TGM & Vcycle & TGM & Vcycle \\
\hline
 31   & 9  & 9  & 4 & 4 & 3 & 3 & 2 & 2 \\
 63   & 10 & 10 & 4 & 4 & 3 & 3 & 2 & 2 \\
 127  & 11 & 11 & 4 & 4 & 3 & 3 & 2 & 2 \\
 255  & 11 & 11 & 4 & 4 & 3 & 3 & 2 & 2 \\
 511  & 11 & 11 & 3 & 3 & 3 & 3 & 2 & 2 \\
 1023 & 11 & 11 & 4 & 4 & 3 & 3 & 2 & 2 \\
 2047 & 11 & 11 & 3 & 3 & 3 & 3 & 2 & 2 \\
\hline
\end{tabular}}
\caption{Number of Multigrid iterations to reach convergence with respect to scaled residual less than
$10^{-7}$: Case $\al$ = Gauss-Seidel ($\nu_{\textrm{pre}}=1$)/Gauss--Seidel ($\nu_{\textrm{post}}=1$), case $\be$ = Gauss--Seidel ($\nu_{\textrm{pre}}=1$) /PCG with Natural-$\tau$ or Frobenius-Optimal $\tau$ preconditioner ($\nu_{\textrm{post}}=1$), case $\ga$ = PCG with Discrete FD Laplacian preconditioner ($\nu_{\textrm{pre}}=1$ or $2$) /PCG with
Natural-$\tau$ or Frobenius-Optimal $\tau$ preconditioner ($\nu_{\textrm{post}}=1$), case $\de$ = PCG with
Discrete FD Laplacian preconditioner ($\nu_{\textrm{pre}}=1$) /PCG with Natural-$\tau$ or Frobenius-Optimal $\tau$ preconditioner this time ($\nu_{\textrm{post}}=2$).}\label{tb:MGM}
\end{table}
\clearpage

\section{Conclusions}\label{sc:final}

In the current note we have considered a type of matrix stemming when considering the numerical approximations of distributed order FDEs (see \cite{BoGr21,MaSe21} for example). The main contribution relies in precise bounds for the minimal eigenvalue of the involved matrices. In fact the new presented bounds improve those already present in the literature and give a more accurate spectral information. The latter knowledge has been used in the design of fast numerical algorithms for the associated linear systems approximating the given distributed order FDEs: an interesting challenge is to consider a $d$-dimensional version of the considered FDE (see \cite{MaSe21}), in order to show how the presented techniques and numerical methods can be adapted and extended in $d$-level setting with $d\ge 2$.

\bibliographystyle{acm}
\bibliography{Toeplitz}

\begin{thebibliography}{10}

\bibitem{AxLi86}
{\sc Axelsson, O., and Lindskog, G.}
\newblock On the rate of convergence of the preconditioned conjugate gradient
  method.
\newblock {\em Numer. Math. 48}, 5 (1986), 499--523.

\bibitem{AxNe94}
{\sc Axelsson, O., and Neytcheva, M.}
\newblock The algebraic multilevel iteration methods-theory and applications.
\newblock In {\em Proceedings of the Second International Colloquium on
  Numerical Analysis\/} (1994), VSP, Utrecht, pp.~13--24.

\bibitem{BoBo15a}
{\sc Bogoya, M., B\"ottcher, A., Grudsky, {\relax S.M}., and Maximenko, {\relax
  E.A}.}
\newblock Eigenvalues of {H}ermitian {T}oeplitz matrices with smooth
  simple-loop symbols.
\newblock {\em Oper. Theory: Adv. and Appl. 422\/} (2015), 1308--1334.

\bibitem{BoBo16}
{\sc Bogoya, M., B\"ottcher, A., Grudsky, {\relax S.M}., and Maximenko, {\relax
  E.A}.}
\newblock Eigenvectors of {H}ermitian {T}oeplitz matrices with smooth
  simple-loop symbols.
\newblock {\em Linear Algebra Appl. 493\/} (2016), 606--637.

\bibitem{BoGr21}
{\sc Bogoya, M., Grudsky, {\relax S.M}., Mazza, M., and Serra-Capizzano, S.}
\newblock On the spectrum and asymptotic conditioning of a class of positive
  definite {T}oeplitz matrix-sequences, with application to
  fractional-differential approximations.
\newblock {\em arXiv:2112.02685\/} (2021).

\bibitem{BoGr98}
{\sc B\"ottcher, A., and Grudsky, {\relax S.M}.}
\newblock On the condition numbers of large semi-definite {T}oeplitz matrices.
\newblock {\em Linear Algebra Appl. 279}, 1/3 (1998), 285--301.

\bibitem{BoGr05}
{\sc B\"ottcher, A., and Grudsky, {\relax S.M}.}
\newblock {\em Spectral properties of banded {T}oeplitz matrices}.
\newblock Society for Industrial and Applied Mathematics (SIAM), Philadelphia,
  PA, 2005.

\bibitem{ChJi07}
{\sc Chan, R., and Jin, X.}
\newblock {\em An introduction to iterative {T}oeplitz solvers}.
\newblock Society for Industrial and Applied Mathematics (SIAM), 2007.

\bibitem{DiBSe99}
{\sc Di-Benedetto, F., and Serra-Capizzano, S.}
\newblock A unifying approach to abstract matrix algebra preconditioning.
\newblock {\em Numer. Math. 82}, 1 (1999), 57--90.

\bibitem{DoGa15}
{\sc Donatelli, M., Garoni, C., Manni, C., Serra-Capizzano, S., and Speleers,
  H.}
\newblock Robust and optimal multi-iterative techniques for {I}g{A} {G}alerkin
  linear systems.
\newblock {\em Comput. Methods Appl. Mech. Engrg. 284\/} (2015), 230--264.

\bibitem{DoGa17}
{\sc Donatelli, M., Garoni, C., Manni, C., Serra-Capizzano, S., and Speleers,
  H.}
\newblock Symbol-based multigrid methods for {G}alerkin {B}-spline isogeometric
  analysis.
\newblock {\em SIAM. J. Numer. Anal. 55}, 1 (2017), 31--62.

\bibitem{DoMa16}
{\sc Donatelli, M., Mazza, M., and Serra-Capizzano, S.}
\newblock Spectral analysis and structure preserving preconditioners for
  fractional diffusion equations.
\newblock {\em J. Comput. Phys. 307\/} (2016), 262--279.

\bibitem{DoMa18}
{\sc Donatelli, M., Mazza, M., and Serra-Capizzano, S.}
\newblock Spectral analysis and multigrid methods for finite volume
  approximations of space-fractional diffusion equations.
\newblock {\em SIAM J. Sci. Comput. 40}, 6 (2018), A4007--A4039.

\bibitem{ErRo06}
{\sc Ervin, V., and Roop, J.}
\newblock Variational formulation for the stationary fractional advection
  dispersion equation.
\newblock {\em Numer. Methods Partial Differ. Equ. 22}, 3 (2006), 558--576.

\bibitem{FiSe92}
{\sc Fiorentino, G., and Serra-Capizzano, S.}
\newblock Multigrid methods for {T}oeplitz matrices.
\newblock {\em Calcolo 28}, 3-4 (1992), 283--305.

\bibitem{GaSe17}
{\sc Garoni, C., and Serra-Capizzano, S.}
\newblock {\em Generalized {L}ocally {T}oeplitz sequences: {T}heory and
  applications. {V}ol. {I}}.
\newblock Springer, Cham, 2017.

\bibitem{HaWo85}
{\sc Hackbusch, W.}
\newblock {\em Multigrid methods and applications}, vol.~4 of {\em Springer
  Series in Computational Mathematics}.
\newblock Springer-Verlag, Berlin, 1985.

\bibitem{HeMo14}
{\sc Hejazi, H., Moroney, T., and Liu, F.}
\newblock Stability and convergence of a finite volume method for the space
  fractional advection-dispersion equation.
\newblock {\em J. Comput. Appl. Math. 255\/} (2014), 684--697.

\bibitem{LiWa18}
{\sc Lin, Z., and Wang, D.}
\newblock A finite element formulation preserving symmetric and banded
  diffusion stiffness matrix characteristics for fractional differential
  equations.
\newblock {\em Comput. Mech. 62}, 2 (2018), 185--211.

\bibitem{MaKa18}
{\sc Mao, Z., and Karniadakis, G.}
\newblock A spectral method (of exponential convergence) for singular solutions
  of the diffusion equation with general two-sided fractional derivative.
\newblock {\em SIAM J. Numer. Anal. 56}, 1 (2018), 24--49.

\bibitem{MaSe21}
{\sc Mazza, M., Serra-Capizzano, S., and Usman, M.}
\newblock Symbol-based preconditioning for {R}iesz distributed-order
  space-fractional diffusion equations.
\newblock {\em Electr. Trans. Num. Anal. 54\/} (2021), 499--513.

\bibitem{MeTa04}
{\sc Meerschaert, M., and Tadjeran, C.}
\newblock Finite difference approximations for fractional advection-dispersion
  flow equations.
\newblock {\em J. Comput. Appl. Math. 172}, 1 (2004), 65--77.

\bibitem{Ng04}
{\sc Ng, M.}
\newblock {\em Iterative methods for {T}oeplitz systems}.
\newblock Oxford University Press, 2004.

\bibitem{NoSe04}
{\sc Noutsos, D., Serra-Capizzano, S., and Vassalos, P.}
\newblock Matrix algebra preconditioners for multilevel {T}oeplitz systems do
  not insure optimal convergence rate.
\newblock {\em Theoret. Comput. Sci. 315}, 2-3 (2004), 557--579.

\bibitem{PeTa11}
{\sc Pedas, A., and Tamme, E.}
\newblock On the convergence of spline collocation methods for solving
  fractional differential equations.
\newblock {\em J. Comput. Appl. Math. 235}, 12 (2011), 3502--3514.

\bibitem{Se95}
{\sc Serra-Capizzano, S.}
\newblock New {PCG} based algorithms for the solution of {H}ermitian {T}oeplitz
  systems.
\newblock {\em Calcolo 32\/} (1995), 53--176.

\bibitem{Se98}
{\sc Serra-Capizzano, S.}
\newblock On the extreme eigenvalues of {H}ermitian {\rm (}block{\rm )}
  {T}oeplitz matrices.
\newblock {\em Linear Algebra Appl. 270\/} (1998), 109--129.

\bibitem{Se99a}
{\sc Serra-Capizzano, S.}
\newblock A {K}orovkin-type theory for finite {T}oeplitz operators via matrix
  algebras.
\newblock {\em Numer. Math. 82}, 1 (1999), 117--142.

\bibitem{Se99b}
{\sc Serra-Capizzano, S.}
\newblock The rate of convergence of {T}oeplitz based {PCG} methods for second
  order nonlinear boundary value problems.
\newblock {\em Numer. Math. 81}, 3 (1999), 461--495.

\bibitem{Se99c}
{\sc Serra-Capizzano, S.}
\newblock Toeplitz preconditioners constructed from linear approximation
  processes.
\newblock {\em SIAM J. Matrix Anal. Appl. 20}, 2 (1999), 446--465.

\bibitem{Se02a}
{\sc Serra-Capizzano, S.}
\newblock Convergence analysis of two-grid methods for elliptic {T}oeplitz and
  {PDE}s matrix-sequences.
\newblock {\em Numer. Math. 92}, 3 (2002), 433--465.

\bibitem{Se02b}
{\sc Serra-Capizzano, S.}
\newblock Matrix algebra preconditioners for multilevel {T}oeplitz matrices are
  not superlinear. {S}pecial issue on structured and infinite systems of linear
  equations.
\newblock {\em Linear Algebra Appl. 343/344\/} (2002), 303--319.

\bibitem{SeTy99}
{\sc Serra-Capizzano, S., and Tyrtyshnikov, E.}
\newblock Any circulant-like preconditioner for multilevel matrices is not
  superlinear.
\newblock {\em SIAM J. Matrix Anal. Appl. 21}, 2 (1999), 431--39.

\bibitem{TiZh15}
{\sc Tian, W., Zhou, H., and Deng, W.}
\newblock A class of second order difference approximations for solving space
  fractional diffusion equations.
\newblock {\em Math. Comput. 84}, 294 (2015), 1703--1727.

\bibitem{WaDu13}
{\sc Wang, H., and Du, N.}
\newblock A superfast-preconditioned iterative method for steady-state
  space-fractional diffusion equations.
\newblock {\em J. Comput. Phys. 240\/} (2013), 49--57.

\bibitem{XuDa20}
{\sc Xu, K., and Darve, E.}
\newblock Isogeometric collocation method for the fractional {L}aplacian in the
  2{D} bounded domain.
\newblock {\em Computer Methods in Applied Mechanics and Engineering 364\/}
  (2020), 112936.

\bibitem{ZeMa17}
{\sc Zeng, F., Mao, Z., and Karniadakis, G.}
\newblock A generalized spectral collocation method with tunable accuracy for
  fractional differential equations with end-point singularities.
\newblock {\em SIAM J. Sci. Comput. 39}, 1 (2017), A360--A383.

\end{thebibliography}
\end{document}